\documentclass[11pt,english]{article}
\usepackage[T1]{fontenc}
\usepackage[utf8]{luainputenc}
\usepackage{xcolor}
\usepackage{pdfcolmk}
\usepackage{amsthm}
\usepackage{amsmath}
\usepackage{stmaryrd}
\usepackage{stackrel}
\usepackage{setspace}
\PassOptionsToPackage{normalem}{ulem}
\usepackage{ulem}
\usepackage{tikz}

\makeatletter

\providecolor{lyxadded}{rgb}{0,0,1}
\providecolor{lyxdeleted}{rgb}{1,0,0}

\numberwithin{figure}{section}
\numberwithin{equation}{section}
\numberwithin{table}{section}

\input amssymb.sty
\usepackage{etoolbox}
\patchcmd{\thebibliography}{\section*}{\section}{}{}

\usepackage{algpseudocode} \usepackage{algorithm}
\usepackage{algpseudocode,algorithm}
\usepackage{lipsum}
\usepackage{caption}
\usepackage{graphics}

\usepackage{amsmath}
\usepackage{amssymb}
\usepackage[all]{xy}

\usepackage{epsfig}\usepackage{youngtab}
	
	\newcommand{\ef}{\end{equation}}
\chardef\bslash=`\\ 

\newdir{:=}{{}}
\newcommand*\colvec[3][]{
	\begin{pmatrix}\ifx\relax#1\relax\else#1\\\fi#2\\#3\end{pmatrix}
}

\newtheorem*{thm*}{Theorem}

\newtheorem{lem}{Lemma}[section]
\newtheorem*{lem*}{Lemma}

\newtheorem*{corl*}{Corollary}

\newtheorem{prop}{Proposition}[section]
\newtheorem{prop*}{Proposition}

\theoremstyle{definition}
\newtheorem{defn}{Definition}[section]
\newtheorem{examp}{Example}
\newtheorem*{examp*}{Example}
\newtheorem*{remark*}{Remark}
\newtheorem*{CC*}{Crossover Conjecture}
\newtheorem*{Note*}{Note}
\newtheorem*{defn*}{Definition}

\theoremstyle{remark}
\newtheorem{remark}{Remark}[section]

\renewcommand{\sectionmark}[1]{}

\makeatother

\usepackage{babel}
\begin{document}
	
	\title{PERMUTATION OF EDGES IN MUTATION REDUCTION OF POINTED BRAUER TREES} 
	\author{Reut Frenkel-Mayzlish, Mary Schaps and Zehavit Zvi}
	
	\maketitle

\begin{abstract}

	Aihara developed an algorithm for Brauer tree algebras, which we call a mutation reduction,  for getting from a Brauer tree algebra to the simpler Brauer star algebra using a sequence  of mutations centered on edges.  Schaps and Zvi, using the  
	Schaps-Zakay theory of pointing the tree, showed that different algorithms for the sequence of mutations give permutations of the edges. 
	
	In \cite{K}, Kozakai gave a new algorithm for a mutation reduction that depends on a given pointing and describes the evolution of the pointing under the mutation reduction.
	
	In this paper, we define a pointed generalized Aihara algorithm and show that its permutation is the identity.  We give a general form for the permutations resulting from Kozakai's algorithm, which we illustrate with examples from  uni-branch binary trees.\footnote{MSC:20C05,20C20,16E35,18G80}

\end{abstract}

\section{INTRODUCTION}

\noindent This work concerns Brauer tree algebras, a widely studied class of 
algebras of finite representation type which includes all  blocks of cyclic defect 
group in modular group representation
theory. A block of cyclic defect group is a Brauer 
tree algebra and its Green correspondent is a Brauer star algebra.
Rickard proved \cite{R1}, \cite{R2} that every Brauer tree algebra has a tilting complex which 
makes it derived equivalent to the corresponding Brauer star algebra.  
Schaps-Zakay \cite{SZ1}, \cite{SZ2} showed that the tilting complexes in the 
opposite direction can be constructed from irreducible 
projective complexes of length two.  This is the 
all-at-once approach to the theory.

~

\noindent The other main approach is  the step-by-step approach
going back to König and Zimmermann {[}KZ1{]}, later formulated in terms
of mutations by Aihara \cite{Ai} used by Chan \cite{Ch}
and Zvonarevna \cite{Zv}, and extended by Kozakai \cite{K}.

\section{DEFINITIONS AND NOTATION}

\subsection{Pointed Brauer trees}

We first define  the Brauer
trees.

\noindent \begin{defn}
	
	\noindent Let $e$ and $m$ be natural numbers. A \textit{Brauer tree}
	of type\textit{ }$(e,m)$\textit{ }is a finite tree $(V,\ensuremath{{\cal E}})$
	where $V$ is the set of vertices, $\ensuremath{{\cal E}}$ is the
	set of edges, $|\ensuremath{{\cal E}|}=e$, together
	with a cyclic ordering of the edges at each vertex and a designation
	of an exceptional vertex which is assigned multiplicity $m$.
	
	\noindent \end{defn}

\noindent The set of all edges incident to vertex $u$ is denoted by $\ensuremath{{\cal E}}(u)$.
By \textquotedbl{}cyclic ordering\textquotedbl{} we mean that for
each edge \textit{E} in $\ensuremath{{\cal E}}(u)$ there is a `next'
edge in $\ensuremath{{\cal E}}(u)$ and that edge has a next edge
in $\ensuremath{{\cal E}(u)}$ etc., until each edge of $u$ is counted
exactly once, in which case\textit{ E} is the next one. We note that
if \textit{E} and $F$ are the only edges of $u$ then $F$ is next
after \textit{E} and \textit{E} is next after $F$.

\noindent \smallskip{}

\noindent Every Brauer tree can be embedded in the plane in such a
way that the cyclic ordering on each $\ensuremath{{\cal E}}(u)$ is
the counterclockwise direction.  Three important examples of Brauer trees are:

(i) The \textit{star} with the exceptional vertex in the center.

(ii) The \textit{linear tree}, which includes, for example, the Brauer trees of blocks of cyclic defect in the symmetric groups.

(iii) The \textit{uni-branch binary trees}, which have one branch at the exceptional vertex, and two new branches added at each vertex out to a distance $t$ from the exceptional vertex.

\noindent \smallskip{}

\noindent We relate Brauer trees to the structure of algebras. This is the main reason for their importance, although they are also used in Physics under the name Temperley-Leib algebras.

\noindent \begin{defn}An algebra $A$ is called a \textit{Brauer
		tree algebra} if there is a Brauer tree such that the indecomposable
	projective $A$-modules can be described by the following algorithm:\renewcommand{\labelenumi}{(\roman{enumi})}
	\begin{enumerate}
		\item There is a bijection between the edges of the tree and the isomorphism
		classes of simple $A$-modules, i.e. each edge is labelled by the
		corresponding isomorphism class. 
		\item If $S$ is a simple $A$-module and $P_{S}$ is the corresponding
		indecomposable projective $A$-module then $P_{S}\supseteq\text{rad}(P_{S})\supseteq\text{soc}(P_{S})\cong S$
		and $\text{rad}(P_{S})/\text{soc}(P_{S})$ is a direct sum of one
		or two uniserial modules corresponding to the two vertices of the edge, with composition factors determined 
		by a clockwise circuit around the vertex.  For edges at the exceptional vertex, 
		the clockwise circuit is made $m$ times.
	\end{enumerate}
	\noindent \end{defn}

\noindent  Even when the algebra which interests us is the block
of a group algebra, we will not use the actual block but rather its skeleton.

\noindent \begin{defn}Let $e$ and $m$ be natural numbers with $e>1$.
	Let $K$ be any field containing a primitive $e$th root of unity
	$\xi$. Let $\widehat{n}=em+1$. Let the cyclic group \({C_e} = \left\langle g \right\rangle \)
	act on the truncated polynomial ring $A=K[x]/x\hat{^{n}}$, $g:x\longmapsto\xi x$.
	The \textit{Brauer star algebra} of type $(e,m)$ is the skew group
	algebra $
	b=A[C_{e}],$
	in which $g$ and $x$ obey the relation
	$g^{-1}xg=\xi x.$
	The algebra $b$ has $e$ distinct simple modules, corresponding to
	the idempotents
	
	\[
	f_{i}=\frac{1}{e} \overset{e-1}{\underset{j=0}{\sum}}\xi^{-ij}g^{j},\quad i=1,..,e,
	\]

	\noindent and satisfying 
	$f_{i}x=xf_{i+1}.$
	
	~
	
	\noindent The corresponding indecomposable projective left modules
	are denoted by
	$P_{i}=bf_{i},\quad i=1,\ldots,e.$
	Each $P_{i}$ is uniserial, and the projective cover of
	rad$(P_{i+1})$ is $P_{i}$. We let $\left\{ x^{s}f_{i}\right\} _{s=0}^{em}$
	be a basis for $P_{i}$, and define the following maps:
	\begin{alignat*}{2}
		\varepsilon_{i}:P_{i} & \rightarrow P_{i}, & \varepsilon_{i}\left(f_{i}\right)=x^{e}f_{i}\\
		\\
		\tilde{h}_{ij}:P_{i} & \rightarrow P_{j},\thinspace\thinspace & \tilde{h}_{ij}\left(f_{i}\right)=x^{k}f_{i} & ,~~k\equiv j-i(mod\,e),\quad0\le k<e.
	\end{alignat*}

	\noindent For $i\neq j$, we denote $\tilde{h}_{ij}$ by $h_{ij}$,
	and for $i=j$ by $id$. For any $0\le\ensuremath{\ell}\leq m$
	we call a map $\varepsilon_{j}^{\ell}\tilde{h}_{ij}\left(=\tilde{h}_{ij}\varepsilon_{i}^{\ell}\right)$
	\textit{normal homogeneous} of degree $\ensuremath{\ell e+k}$, 
	
	\noindent where 
	\begin{align*}
		k & \equiv j-i\left(mod\text{~}e\right),\quad0\le k<e.
	\end{align*}
\end{defn}

\noindent \begin{defn}A cochain map $l_{\bullet}$ between $C^{\bullet}$ and $D^{\bullet}$
	is called \textit{normal homogeneous} if each vertical map is normal
	homogeneous.\end{defn}

\noindent \begin{defn}We call the homomorphism $\varepsilon_{i}^{m}:P_{i}\rightarrow P_{i}$
	the \textit{socle map}, for the obvious reason that it maps the top
	of $P_{i}$ into its socle \(\left\langle {{x^{em}}{f_i}} \right\rangle \).
	\noindent \end{defn}

\noindent The shift $T[n]$ of a complex $T$ shifts it $n$ degrees to the left if $n$ is positive,
and $\mid n\mid$  degrees to the right if $n$ is negative. A partial tilting complex $T$\textit{ }for the Brauer star
algebra $b$ is called \textit{two-restricted}$(PTC_{2})$ if it is
a direct sum of shifts of the indecomposable complexes

\noindent \begin{center}
	$\begin{array}{cccccccccccccc}
		S_{i}: &  &  &  &  &  &  & 0 & \rightarrow & P_{i} & \rightarrow & 0\\
		T_{ij}: &  &  &  &  &  &  & 0 & \rightarrow & P_{i} & \rightarrow & P_{j}& \rightarrow & 0,\quad i< j
		
	\end{array}$
	\par\end{center}

\noindent where the first nonzero  component of $S_{i}$
and $T_{ij}$ is in degree zero. The complexes $S_{i}[n]$ and $T_{ij}[n]$
are called \textit{elementary.} The map from $T_{ij}$ to $T_{ij}$
which is $\varepsilon_{i}^{m}$ on $P_{i}$ and zero on $P_{j}$ is
called the \textit{socle chain map}. It is chain homotopy equivalent
to the map which is zero on $P_{i}$ and $-\varepsilon_{j}^{m}$ on
$P_{j}$. A \textit{tilting complex} is a partial tilting complex from which every projective mordule can be obtained by taking mapping comes. A basis of the endomorphism ring of a tilting 
complex in $PTC_2$ is given by the normal homogeneous maps \cite{SZ1}.

~

\noindent \begin{defn}Let $G$ be a Brauer tree of type $(e,m)$.
	A \textit{pointing} $p$ on $G$ is the choice, for each nonexceptional
	vertex $u$, of a pair of edges $(i,j)$ which are adjacent in the
	cyclic ordering at $u$. If there is only one edge $i$ at $u$, then
	we take $(i,i)$ as the required pair. The tree $G$ together with
	a pointing $p$ is called a \textit{pointed Brauer tree} and denoted by $G(p)$.\end{defn}

\noindent \begin{remark}Recall that we have represented each Brauer
	tree by a planar embedding and the cyclic ordering at each vertex
	by counterclockwise ordering of the edges in the plane. We then represent
	the pointing $(i,j)$ by placing a point in the sector between edge
	$i$ and edge $j$ in a small neighborhood of $u$.\end{remark}

\noindent \begin{defn} Let $B$ be a Brauer tree with vertex set $V$.
	The distance $d(u)$ of any vertex $u\in V$ from the exceptional
	vertex $v$ is the number of edges in a minimal path from $u$
	to $v$ (and hence in any path without backtracking, since the
	graph is acyclic). For any edge $w$, the vertex closest to the exceptional 
	vertex will be called the \textit{near end} and the other vertex 
	will be called the \textit{far end}. The distance of an edge is the distance of the far
	end. 
\end{defn}

\noindent \begin{defn}Let $G$ be a Brauer tree with edge set $\ensuremath{{\cal E}}$.
	An \textit{edge numbering} of $G$ is a Brauer tree with all its edges
	numbered by $1,\ldots,e$. A \textit{vertex numbering} of $B$ is obtained
	from an edge numbering by giving the same number as the edge to the
	farthest vertex from the exceptional vertex on the edge. The exceptional
	vertex is numbered as 0.\end{defn}

~

\noindent \begin{defn}  A\textit{
		Green's walk} for a planar tree is a counter-clockwise circuit of the
	tree as if one were walking around the tree touching each edge with
	the left hand, starting with a chosen initial branch. A \textit{reversed Green's walk} 
	is a circuit in the opposite direction. Each pointing 
	and each choice of an initial branch determines an
	edge numbering  by starting at the exceptional vertex $v$
	and taking a Green's walk around the tree which begins with the initial branch, 
	and numbering the vertices and corresponding edges as $1, 2, 3,\dots, e$
	as one comes to the points. The \textit{reversed Green's walk} is a clockwise circuit of the tree, and produces a reversed edge numbering of the tree. \end{defn}

~
\noindent \begin{defn}
	At any vertex besides the exceptional vertex, we will call the first edge that one
	would meet on a Green's walk around the tree the \textit{ primary edge
	}of the vertex, and the first edge one would meet on a 
	reversed Green's walk will be called the \textit{coprimary edge}.
	The pointing which puts the point between the entering edge and the 
	primary edge at each vertex will be called the \textit{ordinary pointing}
	and the pointing which puts the point between the entering vertex and
	the coprimary edge will be called the  \textit{reversed pointing}.
	The \textit{left alternating pointing} has the point alternately on the left
	or right of the entering vertex, starting on the left for the edge connected 
	to the exceptional vertex. There is a corresponding right alternating pointing.
\end{defn} 

~

\noindent As described in \cite{SZ2}, each pointing determines a two-restricted star-to-tree tilting 
complex, in which the projectives of the tilting complex are taken from the 
Brauer star with the same $(e,m)$ and the opposite algebra of the 
endomorphism ring in the homotopy category is isomorphic to the Brauer tree algebra of the tree 
which was pointed. The inverse tree-to-star complex is described in \cite{RS}. The components $T_i$ of this star-to-tree
complex are stalk complexes for edges at the exceptional vertex
and complexes  $T_{ij}[-n_i]$ or $T_{ji}[-n_i+1]$, depending on 
whether the point is after or before $i$ in the cyclic ordering from the entering 
edge $j$. The shifts are adjusted so that every $P_i$ appears in a unique degree $n_i$. A different pointing would give a different tilting complex 
with isomorphic endomorphism ring.

\noindent \begin{defn}Consider a sequence $\{r_i\}_{i=1}^{l}$ of elements
	of $\left\{ 1,...,e\right\} $. Set
	\[
	h=\widetilde{h}_{r_{l-1}r_{l}}\circ...\circ\widetilde{h}_{r_{1}r_{2}}=\varepsilon_{r_{l}}^{\alpha}\widetilde{h}_{r_{1}r_{l}}.
	\]

	\noindent Then the sequence is $\textit{short}$ if $\alpha=0$ and
	$\textit{long}$ if $\alpha>0$. We generally represent the sequence
	in the form $r_{1}\rightarrow r_{2}\rightarrow...\rightarrow r_{l}$.\end{defn}

\noindent \begin{examp}If $e \geq 3$,
	\begin{itemize}
		\item $1\rightarrow 2\rightarrow 3$ is short
		\item $1\rightarrow 3\rightarrow 2$ is long.
	\end{itemize}
	\noindent \end{examp}

\noindent In \cite{SZ1} it was shown  that  
a chain map $\ell_\bullet:T_{ik} \rightarrow T_{jk}$  has the identity map at $P_k$
if  $i \rightarrow j \rightarrow k$ is short and is the socle map if
$i\rightarrow j \rightarrow k$ is long, and similarly for the dual
map from  $T_{ij}$ to $T_{ik}$.

\subsection{Mutation}

\noindent It is, of course, possible to define tilting complexes between
two general Brauer tree algebras. Of particular importance are the
tilting mutations of {[}Ai{]}, which go back to work of Rickard \cite{R2} and
Okuyama \cite{O}, or alternatively, to Kauer \cite{Ka}. Let $A$ be a finite dimensional basic algebra, with projective modules $P_j$. To each $j$,
we can associate an idempotent $\tilde{e}_{j}$ with $1_{A}$=$\stackrel[j=1]{e}{\sum}$$\tilde{e}_{j}$.

\noindent \begin{defn}
	Fix an $i$ and define $e_{0}=\underset{j\neq i}{\sum}$$\tilde{e_{j}}$.
	For any $j$$\ensuremath{{\cal \in E}}$ we define a complex by 
	
	$T_{j}^{(i)}=\begin{cases}
		\begin{array}{cccc}
			(0th) & \, & (1st) & \,\\
			P_{j} & \longrightarrow & 0 & \:\:j\neq i\\
			Q_{i} & \overset{\pi_{i}}{\longrightarrow} & P_{i} & \:\:j=i
		\end{array} & \,\end{cases}$
	
	\noindent where $Q_{i}\overset{\pi_{i}}{\longrightarrow}P_{i}$ is
	a minimal projective presentation of $\text{\ensuremath{\tilde{e}_{i}A}}/\tilde{e}_{i}Ae_{0}A$.
	Now we define $T^{(i)}:=\varoplus_{j\ensuremath{{\cal \in E}}}T_{j}^{(i)}$.
	The\textit{ mutation }$\mu_{i}^{+}$ of $A$ is $A'$ $\cong$ End$^{op}_{D^{b}(A)}$$T^{(i)}$.
	We will also consider the dual variant, as in $[S].$
	
	$T_{j}^{(i)}=\begin{cases}
		\begin{array}{cccc}
			(-1st) & \, & (0th) & \,\\
			0 & \longrightarrow &P_{j} & \:\:j\neq i\\
			P_{i} & \overset{\pi_{i}}{\longrightarrow} & Q_{i} & \:\:j=i
		\end{array} & \,\end{cases}$
	
	~
	
	\noindent where $Q_{i}$ is the minimal injective hull of the
	quotient of $P_{i}$ by the largest submodule containing only
	components isomorphic to the simple module  $S_{i}$.  This injective hull will be a direct
	sum of injective modules (which are also projective) whose irreducible
	socles give the socles of this quotient. We will denote this 
	by $\mu^{-} \cite{AI}$ (see, e.g., \cite{Zv} for more detail in the case of Brauer trees.)
	Since $A$ is a symmetric algebra, either version of the mutation will give a tilting complex. 
\end{defn}

\noindent Now let $A$ be a Brauer tree algebra.  Aihara showed in \cite{Ai} that there is a simple combinatorial
operation on edges $j$$\ensuremath{{\cal \in E}}$ which corresponds to the mutation. These also appeared in a paper by Kauer and are sometimes called ``Kauer moves''. We use the notation in \cite{K}, which is dual to the notation in \cite{Ai}.

\begin{figure}
	\[
	\xymatrix{
		&	&	&	&	&	& {\circ} \\
		& {\circ}   \ar@{-}[ul]^{l_1} \ar@{-}[ur]_{l_m} \ar@{}[u]|(0.4){\dots}	&	&	&	& {\circ} \ar@{-}[ur]^{h_1} \ar@{-}[dr]_{h_d} \ar@{}[r]|(0.4){\vdots}	&	\\
		{\circ}	&	& {\circ} \ar@{-}[rr]^i \ar@{-}[dl]^{k_1} \ar@{-}[ul]_{k_a} \ar@{}[l]|(0.4){\vdots}		&	& {\circ} \ar@{-}[ur]^{j_1} \ar@{-}[dr]_{j_b} \ar@{}[r]|(0.4){\vdots} &	& {\circ}  \\
		& {\circ} \ar@{-}[dl]^{g_1} \ar@{-}[ul]_{g_c} \ar@{}[l]|(0.4){\vdots}	&	&	&	& {\circ}  \ar@{-}[ld]_{n_1}  \ar@{-}[rd]^{n_l}  \ar@{}[d]|(0.4){\dots} \\
		{\circ}	& & & & & & & & & & & & & &\\ }	
	\]
	
	\caption{The center $i$ of a mutation}
\end{figure}

\begin{defn} For a mutation $\mu_i^-$ centered at edge $i$, we let $k_1$ and $j_1$ be the edges immediately preceding $i$ at the two vertices of $i$. We then erase $i$ and redraw it connecting the two farther vertices of $k_1$ and $j_1$ with each other, this new edge being called $i$ and all other names remaining the same. If the edge $i$ is a leaf,  then there is only one reattachment made.

$\mu-:$

\xymatrix{
	&	&	&	&	&	& {\circ} \\
	& {\circ}	  \ar@{-}[ul]^{l_1} \ar@{-}[ur]_{l_m} \ar@{}[u]|(0.4){\dots}	 &	&	&	& {\circ} \ar@{-}[ur]^{h_1} \ar@{-}[dr]_{h_d} \ar@{}[r]|(0.4){\vdots}	&	\\{\circ}	 &	& {\circ} \ar@{-}[dl]_{k_1}   \ar@{-}[ul]_{k_a} \ar@{}[l]|(0.4){\vdots}	 & & {\circ} \ar@{-}[ur]_{j_1} \ar@{-}[dr]_{j_b}  \ar@{}[r]|(0.4){\vdots}	&	& {\circ} \\
	& {\circ} \ar@{-}[rrrruu]^i  \ar@{-}[dl]^{g_1} \ar@{-}[ul]_{g_c} \ar@{}[l]|(0.4){\vdots}	&	&	&	& {\circ}    \ar@{-}[ld]_{n_1}  \ar@{-}[rd]^{n_l}  \ar@{}[d]|(0.4){\dots} \\
	{\circ}	& & & & & & & & & & & & & &
}

The dual mutation  $\mu_j^+$ centered at edge $j$ is done by letting $k_a$ and $j_b$ be the edges immediately after $i$. The new edge $i$ will connect the farther vertex of $k_a$ with the farther vertex of $j_b$.

$\mu+:$

\xymatrix{
	&	&	&	&	&	& {\circ} \\
	& {\circ}   \ar@{-}[ul]^{l_1} \ar@{-}[ur]_{l_m} \ar@{}[u]|(0.4){\dots}	&	&	&	& {\circ} \ar@{-}[ur]^{h_1} \ar@{-}[dr]_{h_d} \ar@{}[r]|(0.4){\vdots}	&	\\{\circ}	 &	& {\circ} \ar@{-}[dl]^{k_1}   \ar@{-}[ul]^{k_a} \ar@{}[l]|(0.4){\vdots}		&	& {\circ}    \ar@{-}[ur]^{j_1} \ar@{-}[dr]^{j_b}  \ar@{}[r]|(0.4){\vdots}	&	& {\circ}	\\
	& {\circ} \  \ar@{-}[dl]^{g_1} \ar@{-}[ul]_{g_c} \ar@{}[l]|(0.4){\vdots}	&	&	&	& {\circ}  \ar@{-}[lllluu]^i    \ar@{-}[ld]_{n_1}  \ar@{-}[rd]^{n_l}  \ar@{}[d]|(0.4){\dots}    \\
	{\circ}	& & & & & & & & & & & & & &
}

\end{defn}

\section{MUTATION REDUCTION ALGORITHMS}

Assume we are given a Brauer tree $G$, with multiplicity $m$.  If $m>1$, 
then there is a designated exceptional vertex $v$. For $m=1$, we assume that one of
the vertices has been chosen as the exceptional vertex $v_0$. Since our graph is a tree,
there is a well-defined distance of each vertex $u$ from $v_0$ given by 
counting the number of edges on the unique path connecting them.
If the edges of the tree are labelled, then each vertex can be given
the same label as the first edge on this unique path.

~

\noindent \begin{defn}
	A \textit{mutation reduction} is a mutation or  sequence of mutations such that the distance of 
	each edge from the exceptional vertex does not ever increase,
	and such that at least one such distance actually decreases. A mutation
	reduction which ends at the Brauer star is called \textit{complete}.
\end{defn}

~

\noindent \begin{lem}\label{branch} \cite{SZv} Assume we are given a Brauer tree.
	
	\begin{enumerate}
		\item A mutation $\mu^-$ which is a mutation reduction must be centered at a primary edge.
		\item A mutation centered at a primary edge connected to an edge adjacent to the 
		exceptional vertex is a mutation reduction.
		\item After a complete mutation reduction, all the edges from a given branch 
		form an interval around the Brauer star, and these intervals follow the counterclockwise ordering of the branches.
	\end{enumerate}
\end{lem}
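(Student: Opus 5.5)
The plan is to work directly from the combinatorial description of $\mu_i^-$ in the definition above. Throughout we track, for each edge, the distance of its far end from the exceptional vertex $v$. Let $i$ be the center, with endpoints $a$ (near end) and $b$ (far end), so $d(b)=d(a)+1$. Let $k_1$ be the edge immediately preceding $i$ at $a$, and $j_1$ the edge immediately preceding $i$ at $b$. The mutation detaches $i$ and reattaches it between the far vertices of $k_1$ and $j_1$. If one of these edges does not exist because $i$ is a leaf at $b$, then only one endpoint moves. All other edges keep their endpoints. Distances can still change, however, for the whole subtree beyond $b$ that used to hang from $i$: after the mutation it is reached through the new position of $i$.

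\textbf{Part (1).} I would argue by contraposition. Suppose $i$ is not a primary edge at its near end $a$. Then, in the cyclic order at $a$, the preceding edge $k_1$ is not the entering edge of $a$ but an edge going further out. Its far end then lies at distance $d(a)+1$, so the new near endpoint of $i$ is at distance $\ge d(a)+1$. Consequently $i$, and every edge in the subtree beyond $b$, has its distance weakly increased. At least $i$ strictly increases unless the other endpoint compensates. The endpoint from $j_1$ is at distance $d(b)\pm1$, so the new far end is at distance $\ge d(a)+2$, which is strictly larger. This contradicts the definition of a mutation reduction.

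The delicate point is the correct reading of ``primary'': the first edge met on the Green's walk at $a$ after entering should be exactly the one for which $k_1$ is the entering edge. I would verify this orientation convention against the dual convention of \cite{K} before writing the argument. The degenerate case $a=v$ also needs separate handling: there every edge is at distance $1$, and distance can only stay at $1$ if $i$ does not move away. This has to be checked by hand.

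\textbf{Part (2).} Here $i$ is primary at $a$, and $a$ is the far end of an edge $k_1$ adjacent to $v$, with $k_1$ preceding $i$. The mutation then attaches the near end of $i$ to $v$ itself, since $v$ is the far vertex of $k_1$ seen from $a$. So $i$ now has distance $1$ instead of $2$, a strict decrease. I expect the main work to lie with the other endpoint. It moves to the far end of $j_1$ at $b$, and the subtree hanging there is re-rooted. I would check that each edge in that subtree has its distance decreased by exactly one or unchanged, and that no edge outside it moves. Both checks are a local computation from the picture in Figure 1.

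\textbf{Part (3).} I would argue by induction on the number of mutations. The invariant to maintain is: each branch at $v$, meaning the set of edges whose path to $v$ starts with a given edge at $v$, stays connected. Moreover, the edges at $v$ belonging to a given branch form a contiguous interval in the cyclic order at $v$, and these intervals appear in the counterclockwise order of the original branches. A single reducing mutation moves $i$ only within its own branch: either to another vertex of that branch, or to $v$ adjacent to $k_1$, immediately after the interval of its branch. Either way the invariant is preserved. At the star, every edge is incident to $v$, so the invariant is exactly the statement.

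The main obstacle is making the orientation conventions (preceding versus following, primary versus coprimary) consistent in part (1).
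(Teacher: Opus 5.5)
Your direct distance bookkeeping is the natural route, and it handles (2), (3) and the generic case of (1). The paper gives no argument of its own here; it only quotes the lemma from [SZv].

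The cleanest way to run every case is this. Deleting $i$ splits the tree into the component containing $v$, whose distances never change, and the component hanging from $b$, which is re-rooted at the new far end of $i$. Your orientation worry is unfounded. The Green's walk turns counterclockwise around each vertex, so the primary edge at $a$ is the counterclockwise successor of the entering edge. Hence $i$ is primary at $a$ exactly when $k_1$ is the entering edge of $a$. Also, since $i$ is the entering edge at $b$, $j_1$ is always a child of $b$, so its far end $b'$ is at distance $d(b)+1$, not $d(b)\pm 1$.

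\textbf{The gap.} The case $a=v$ that you postpone cannot be settled ``by hand'' in the way you suggest.

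If $v$ has at least two edges, your generic argument applies verbatim. Then $k_1$ is another edge at $v$, the new near end of $i$ has distance $1$, and $i$ goes from distance $1$ to distance $2$.

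If $i$ is the only edge at $v$, nothing is reattached at $v$; only the far end moves, and the mutation can be a reduction. Take the linear tree with edges $1=vu_1$, $2=u_1u_2$, $3=u_2u_3$. Then $\mu_1^-$ keeps the end $v$ and slides the end $u_1$ along $2$ to $u_2$. This gives edges $vu_2$, $u_2u_1$, $u_2u_3$, so the distances $(1,2,3)$ become $(1,2,2)$. This is a mutation reduction centered at an edge that is not primary, since primary edges exist only at nonexceptional vertices.

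Your remark that the distance ``can only stay at $1$ if $i$ does not move away'' misses that moving the far end re-roots the subtree beyond $b$ and can lower other distances. So (1), as literally stated, needs the extra hypothesis that the center is not the unique edge at $v$. Your proof must make this exclusion explicit rather than promise a check that would fail.

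This does not affect (3). In a reduction an edge at $v$ can never leave $v$, since it would pass from distance $1$ to at least $2$. So the valency of $v$ never drops, and valency $1$ forces a single original branch, for which (3) is trivial.

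\textbf{Two smaller corrections.}

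In (2), your predicted outcome is inaccurate. $i$ and $j_1$ drop by one, the subtree beyond $b'$ drops by two, and the other subtrees at $b$ are unchanged. Only non-increase matters, and the same computation shows that $\mu^-$ at \emph{any} primary edge is a reduction.

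In (3), state the invariant as follows: every current rooted branch lies inside one original branch, and the roots coming from each original branch are consecutive at $v$, in the original counterclockwise order. Together with the fact that edges at $v$ stay at $v$, the only change at $v$ is the insertion of $i$. That insertion is immediately clockwise of $k_1$, so $i$ precedes $k_1$ counterclockwise, as in step 3 of Aihara's algorithm. This position need not be at the end of the interval. Adjacency to $k_1$, an edge of the same original branch, is all the invariant needs.
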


The dual (1) and (2) of this lemma for $\mu^+$ is given in \cite{Z2} as Lemma 3.2.  Part (3) is there replaced by a general lemma for mutation reduction, \cite{Z2}, Lemma 3.3.

\subsection{Aihara's algorithm}

\noindent We now give the original algorithm by Aihara \cite{Ai}

~

\noindent \uline{Aihara's Algorithm}\cite{Ai} 
\begin{enumerate}
	\item Choose an initial branch.  
	
	\item In a Green's walk starting at the root of the initial branch choose the first primary edge
	$w$ attached to  an edge adjacent to the exceptional vertex.  If the tree is not a star, there must
	be such an edge. 
	
	\item By Lemma \ref{branch}(2), the mutation centered on this edge $w$ is a mutation
	reduction, and from the proof we see that it creates two adjacent branches from the original,
	the first of which in counter-clockwise order is rooted at $w$.
	
	\item If $w$ was on the initial branch, let the new initial branch be the new branch rooted at $w$, and otherwise
	let the initial branch remain as before.  Begin again from 2.
	
\end{enumerate}

In \cite{SZv} it was shown that the Aihara algorithm is faster than an algorithm from \cite{Z} in which the centers are always leaves, but in fact in \cite{Z2}, the third author showed that all versions of Aihara's algorithm using different combinations of $\mu^-$ and $\mu^+$ are equally efficient in terms of the number of steps required for a complete mutation reduction:  The number of steps is always $e-1$ where $e$ is the number of edges, and this is the lowest possible for any mutation reduction.

\subsection{Kozakai's algorithm}

Kozakai constructed an algorithm for a pointed tree $G(p)$, that decides on the location of the center and whether or not it will be a primary or coprimary edge according to an existing pointing, and then determines the pointing of the result of the mutation. 

His algorithm depends on a pointed variant of the Kauer moves, the problem being to decide where to put the points at the vertices affected by the mutation. If $G(p)$ is a pointed Brauer tree, we define a pointed mutation $\mu^-(p)$,as follows.
\begin{enumerate}
	\item As an unpointed Brauer tree, $\mu^-(p)(G(p))= \mu^-(G)$.
	\item At every vertex not involved in the mutation, the point remains in the same segment as in $G(p)$.
	\item Refering to Fig. 2.1, if the point at one vertex of $i$  is between $i$ and one of the edges immediately before or after, then after the removal of $i$ it will be in the sector between those two edges.
	\item On the right-hand side of Fig. .1, if the point is in the sector between $g_1$ and $k_1$, then after the mutation it is between $g_1$ and $i$. The other side is dual. Heuristically, if we think of the vertex of $i$ as sliding down the edge before $i$, it pushes any points out of the way.
\end{enumerate}

\noindent The definition of $\mu^+(p)$ is dual.

\noindent \uline{Kozakai's algorithm}\cite{K}.
\noindent We assume we are given a pointed Brauer tree $G(p)$. We regard this tree as the $0$ step, letting 
$G_0=G$ and $p_0=p$.
\begin{enumerate}
	\item We fix a branch that is not a leaf as initial branch and designate it as the current branch.  Assume we have performed $t$ mutations, arriving at a Brauer tree $G_t$ with pointing $p_t$.
	\item If the point at the root of the current branch is not in the sector between the root and the primary edge, we let $\epsilon_t=-$ and if it is to the right, we let $\epsilon_t=+$. In the first case, we take a Green's walk consisting of primary edges until we reach the first point, which is at or before the first leaf we encounter.   We let the center $i_t$ be the  entering edge of the vertex at which the point is located. If the point at the root is in the sector between the root and the primary edge, we take a reversed Green's walk along coprimary edges until we reach the first point. By the choice of the sign, $i_t$ is at least at distance $1$ from the exceptional vertex. We now perform the mutation $\mu_{i_t}^{\epsilon_t}(p_t)$. Set $G_{t+1}=\mu^{\epsilon_t}G_{t+1}$ and let $p_{t+1}$ be the pointing of $G_{t+1}$ determined by $\mu_{i_t}^{\epsilon_t}(p_t)$. If $i_t$ is at distance $1$, so that the mutation creates two branches, the branch containing $i_t$ will be the current branch.
	\item Repeat Step 2 as long as the current branch is not a leaf. 
	\item When the current branch is a leaf, choose a new current branch  which is not a leaf and whose edges belong to the initial branch. Repeat Step 2 and Step 3 until all edges from the initial branch are leaves. If the tree is not yet a Brauer star, return to Step 1 to choose a new initial branch.
	\end{enumerate}
	
	\begin{remark}
		We made two adjustments to Kozkai's algorithm. The first was the return to Step 1 to choose a new initial branch when the first is exhausted.  This, we believe, was simply correcting an oversight, since otherwise the algorithm cannot continue.
		
		The second is a freer choice of new current branch in Step 4.  In the original algorithm, the new current branch is chosen by a Green's walk from the far clockwise side or a reversed Green's walk from the  counterclockwise side.  Unfortunately, when there are several branches, it could happen that on both of these walks one encounters a point at distance $1$ from the exceptional vertex.  In our version, the choice of parity $\epsilon_t$ is made on the basis of the point at the end of the root of the current branch. We have kept the spirit of the original algorithm by insisting that all edges of the initial branch be moved to the exceptional vertex before we choose a new initial branch.
	\end{remark}
	
	In Kozakai's original paper, he uses a random numbering for the edges in a branch, which are carried along during the mutation reduction to a set of leaves on the Brauer star which are all grouped together.  For $\mu^-$ this was proven in \cite{SZv} and for $\mu^+$, this was proven in \cite{Z2}.

Kozakai's algorithm arranges that in the mutation of the graph, there is never a point in the sector to which the edge 
$i$ will be connected.  Then, after the edges are reconnected, there is no point in the angle between the repositioned  edge and the edge to which it is now connected at the opposite end.  Otherwise, all points are left where they were.

\section{RANDOM NUMBERING AND POINTED NUMBERING}

Note that Kozakai numbers his edges at random, which makes the diagrams simpler.  Each mutation gives a one-to-one correspondence of edges, so at the end of a chain of mutations, there is a one-to-one correspondence of the original edges and the edges in the Brauer star, which have a counterclockwise ordering.

\begin{defn}
	For any branch of a Brauer tree, the edge connected to the exceptional vertex is called the $\textit{root}$.  All the edges before the point at the nonexceptional vertex of the root are called the \textit{left edge-group} and those after the point are called the \textit{right edge-group}.
	A branch at the exceptional vertex will be called a \textit{rooted branch}.
\end{defn}

\begin{lem}\label{split}
	If a pointed Brauer tree $G(p)$ is numbered according to the pointed edge numbering 
	from an interval $[a,\dots,b]$, and the root is numbered $c$, where $a \leq c \leq b$, then the permutation
	$\sigma$ has the form $\pi_L(c)\pi_r$, where $\pi_L$ is a permutation of 
	$a,a+1,\dots,c-2,c-1$ and $\pi_R$ is a permutation of $c+1,c+2,\dots, b-1,b$.
\end{lem}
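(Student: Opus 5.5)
The plan is to follow the root through the whole mutation reduction and show it always separates the left edge-group from the right one. The permutation $\sigma$ compares the pointed edge numbering of the branch with the counterclockwise order of the same edges on the final star.

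First I fix the numbering. In the Green's walk the branch is entered through its root, but the root is numbered only when the walk reaches the point at its far vertex. So the edges numbered before the root, namely $a,\dots,c-1$, are exactly the left edge-group: the root's vertex lies between them and the point. The edges numbered $c+1,\dots,b$ are the right edge-group. The lemma is then equivalent to this claim: after a complete mutation reduction, the edges of the left edge-group occupy a consecutive interval of the star, then comes the root, and then the right edge-group occupies a consecutive interval.

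I will prove this invariant by induction on the number of mutations. By Lemma \ref{branch}(3) and its dual in \cite{Z2}, the edges of one branch end up as an interval on the star. The new point is the position of $c$ inside that interval. The key step analyses a single mutation centered at an edge $w$ attached to the root's far vertex.

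\begin{itemize}
\item If the mutation is $\mu^-$ at a primary edge, or $\mu^+$ at a coprimary edge, then $w$ slides down the neighbouring edge to the exceptional vertex. It becomes the root of a new branch, which lies on the counterclockwise (or clockwise) side of the old root.
\item Kozakai's sign rule, Step 2, chooses $\epsilon_t$ from the side of the point at the root. So the detached subtree always comes from the edge-group on the same side as the direction in which it is moved.
\item By rule (3)--(4) of the pointed mutation, the point at the root vertex is pushed but never crosses the root.
\end{itemize}

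Hence every edge split off into a new branch lies entirely in the left group and is placed before the root, or lies entirely in the right group and is placed after it. Mutations centered deeper in the branch, at distance greater than $1$, do not change which group an edge belongs to. By Lemma \ref{branch}(3), all edges of a subbranch split off from the left group stay in an interval preceding the root's branch; symmetrically for the right group.

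Induction on the number of mutations then shows the root stays a single edge whose two sides collect exactly the left and right groups. Reading off the star counterclockwise gives $\sigma=\pi_L(c)\pi_R$.

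The main obstacle is the one-step analysis: proving that the side to which a split-off branch is moved always matches the edge-group it came from. For this I would check the four cases $\mu^\pm$ with the point left or right of the primary edge, using Figure 2.1. I would also verify that the point at the root's far vertex updates as rule (4) says, so that the classification into left and right groups is preserved at later steps.
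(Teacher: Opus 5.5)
Your proposal is correct and follows essentially the paper's argument. The Green's-walk numbering gives the left edge-group $a,\dots,c-1$ and the right edge-group $c+1,\dots,b$. Kozakai's sign rule then ensures that branches split off by $\mu^-$ come only from the left group and branches split off by $\mu^+$ only from the right group, and each lands and stays on its own side of the root. The paper packages the conclusion as a product of transpositions inside each edge-group, while you read the positions directly off the final star; these are the same argument.

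One orientation detail in your first bullet needs fixing: a $\mu^-$ at a primary edge puts the new branch immediately \emph{before} the root in counterclockwise order, i.e.\ on its clockwise side. This matches the paper's description of step 3 of Aihara's algorithm, and it is what your conclusion that the left group is ``placed before the root'' actually requires.
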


\begin{proof}
	When we take a Green's walk around the tree, we encounter all the edges of the left edge-group before we reach the root point.  Since the root is given the number $c$, all the edges in the left edge-group must be numbered from $a,a+1,\dots, c-2,c-1$.
	The numbers remaining for the left edge-groups are $c+1,c+2,\dots,b-1,b$. Now, as long as the root point has not been exposed by the mutations, any operation on the initial branch is in the first case of Kozakai's algorithm, using $\mu^-$. If some mutation moves a non-trivial branch to the exceptional vertex, it will still be  clockwise from the root.  If we now begin to use $\mu_j^+$ on the initial branch, the new rooted branches we will create will lie on the clockwise side of the original root, either because they were created from the initial branch by $\mu_j^+$ or because they were created from some new rooted branch on the clockwise side and remain adjacent to the rooted branch from which they were taken.

	  In the Kozakai algorithm, the permutation $\sigma$ is a composition of transpositions all of which are transposition of two numbers in one edge group or the other. All the $\sigma_i$ on the left side are transposition of numbers of edges on the left edge-group.  All the $\sigma_i$ on the right side are transpositions of edges in the right edge-group.
	A product of transpositions from a certain set is a permutation of the members of that set. Thus, the final permutation has the structure asserted.  The cycles being disjoint, they commute, The order in which we wrote them was chosen to accord with the order of the numbers in the interval.
\end{proof}

\subsection{ The symmetry of Kozakai's algorithm}

Kozakai's algorithm appears to be non-symmetric, in that it choses the mutation 
$\mu^-$ and the counterclockwise Green's walk as default, and only applies the 
dual $\mu^+$ when the possibilities of using $\mu^-$ have been exhausted.  However, this apparent asymmetry is illusory.  

\begin{remark}\label{duality}
	In a uni-branch tree, if one changes the default in the Kozakai algorithm from $\mu^-$ to $\mu^+$ and changes the Green's walk to the reversed Green's walk,  one gets the same number of - and + mutations in the complete mutation reduction from the original centers which are connected to an edge adjacent to the exceptional vertex. The mutations change the placement of the points at the far end. Whether the total number of - and + mutations depends on the default bears investigating. 
\end{remark}

\section{THE POINTED GENERALIZED AIHARA ALGORITHM}

In \cite{Z2}, Zvi defined a generalization of the Aihara algorithm that used both primary and coprimary edges.  We now take Kozakai's idea of doing mutations on a pointed tree only at edges where there is no point between the entering edge and the center.  However,  Zvi also proves in that article that the generalized Aihara algorithms are more efficient in terms of number of centers than any other mutation reduction algorithm [\cite{Z2}, Thm. 3.1]. We want to define a mutation reduction algorithm for pointed trees based on Aihara. We will now define a pointed generalized Aihara algorithm and show that its corresponding permutation is the identity.

\begin{defn}
	Let $G(p)$ be a pointed tree.  In the \textit{pointed generalized Aihara algorithm}, we operate by $\mu^-$ on the primary edges connected to an edge adjacent to  the exceptional vertex as long as there is no point between the entering vertex and the primary edge.  We then operate on the coprimary edges 
	connected to an edge adjacent to the exceptional vertex by $\mu^+$ as long as there is no point.  At this stage, the original root has been reduced to a leaf because any branch to the right or left of the point would have been sent to the exceptional vertex already. 
	We then follow the same procedure for the new branches which have been created until we reach the Brauer star.

\end{defn}
\begin{prop}
		Let $G(p)$ be a pointed tree and do a complete mutation reduction according to the pointed generalized Aihara algorithm. Assume that the edges in the tree were numbered according to the edge numbering obtained from a Green's walk. Then the order of the edges in the Brauer tree will be in increasing numerical order in the counter-clockwise direction and thus the permutation associated with the mutation reduction algorithm will be the identity.  
\end{prop}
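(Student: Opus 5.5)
We will prove the statement by strong induction on the number of edges $e$, or more precisely on the sum over edges of their distances from the exceptional vertex. The invariant we track is the following property of a pointed tree $G(p)$ with a labelling of its edges: \emph{the labels increase along the Green's walk in the order in which the points are met}. If this holds for a Brauer star, then the points all sit at the far ends of the leaves, so the Green's walk meets the leaves in counterclockwise order and the labels increase counterclockwise. That is exactly the assertion that the permutation is the identity.

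The key step is a local lemma: a single pointed mutation $\mu^-_w(p)$ at a primary edge $w$ preserves the invariant when $w$ is attached to an edge $r$ adjacent to $v$ and there is no point between $r$ and $w$. The same is claimed for $\mu^+_w(p)$ at a coprimary edge with no point between the entering edge and $w$. To prove it we use the local picture of the mutation: $w$ is detached from the far end of $r$ and reattached at $v$ immediately before $r$ in the cyclic order (by Lemma \ref{branch}(2) and Aihara's step 3), together with the rules for moving points. We then check that the cyclic sequence of points met by the Green's walk is unchanged. The subtree hanging below $w$ is traversed as a block in both trees. Because no point lies in the sector between $r$ and $w$, the walk enters this block at the same position relative to the points in the other sectors of the far end of $r$. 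Before the mutation these points are the root point of $r$ and any points of earlier subtrees. The sector rules (3) and (4) of the pointed mutation send the point at the far end of $r$ to the sector between the two neighbours that remain, so no point is crossed. For $\mu^+$ the argument is the mirror image, using the reversed Green's walk and the coprimary edge. The conclusion is that the numbering by the Green's walk of the new tree, starting from the new leftmost branch, coincides with the old labels.

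With the lemma in hand, the proposition follows by iterating along the definition of the algorithm. On a rooted branch we first apply $\mu^-$ at primary edges, which detaches the subtrees lying before the root point one after another. Each of these becomes a new rooted branch placed counterclockwise before the root, and its labels are smaller than $c$, in the spirit of Lemma \ref{split}. We then apply $\mu^+$ at coprimary edges, which detaches the subtrees lying after the root point, placed clockwise, with labels greater than $c$. When this phase ends the root is a leaf carrying label $c$. Each new branch is a pointed tree whose inherited labels form an interval and which is numbered by its own Green's walk, so the induction hypothesis applies to it. Concatenating these intervals in counterclockwise order, which Lemma \ref{branch}(3) guarantees preserves branch intervals, gives an increasing order.

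The main obstacle is the local lemma. In particular we must show that the position of $w$'s subtree in the Green's walk order matches the side of the root point on which it lies, and this requires careful bookkeeping with the four sectors of Fig.~2.1 and the point-pushing rule (4). We will first settle the case where $w$ is a leaf and then reduce the general case to it by treating the subtree below $w$ as a single block.
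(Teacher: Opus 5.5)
\textbf{The gap: your local lemma fails whenever the center is not a leaf.} The picture of $\mu^-_w$ you use is correct only when $w$ is a leaf. By the definition of $\mu^-$, both ends of a non-leaf center move. The near end slides along $r$ to $v$. The far end $x$ of $w$ also slides, along the edge $x_b$ immediately preceding $w$ at $x$ (the coprimary edge of $x$), to the far end $y$ of $x_b$, where $w$ is inserted immediately before $x_b$.

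So the branch below $w$ is not carried over as a block. It is re-rooted: $y$ becomes the far end of $w$, and $x$ becomes the far end of $x_b$. Even when the sequence of points met by the walk is unchanged, the labels attached to them change. The point at $x$, which carried the label of $w$, now numbers $x_b$. The point at $y$, which carried the label of $x_b$, now numbers $w$. Two labels are exchanged and in general your invariant breaks. The same happens for $\mu^+$, where $x$ slides along its primary edge. Your leaf case is fine, but you cannot reduce the general case to it by treating the subtree as a single block. The same objection applies to your later claim that each detached branch is numbered by its own Green's walk, so the induction hypothesis does not apply to it.

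\textbf{The conclusion itself fails under the paper's conventions, so this cannot be patched.} Write $(a,b)$ for the sector running from an edge $a$ to the next edge $b$. Take the path in which $r$ joins $v$ to $u$, $w$ joins $u$ to $x$, and $x_1$ joins $x$ to the leaf $y$. Put the point at $u$ in $(w,r)$ and the point at $x$ in $(w,x_1)$. The Green's walk then gives $w=1$, $x_1=2$, $r=3$.
\begin{itemize}
\item The algorithm applies $\mu^-_w$. Now $w$ joins $v$ to $y$, and by rule (4) the point at $y$ lies in $(x_1,w)$.
\item The walk on the new branch therefore meets the point at $x$ (now numbering $x_1$, label $2$) before the point at $y$ (now numbering $w$, label $1$).
\item The algorithm next applies $\mu^-_{x_1}$, and the star reads $x_1,w,r$, that is $2,1,3$, counterclockwise.
\end{itemize}
Similarly, on the full balanced binary tree of order $3$, the first move $\mu^-_2$ slides the far end of $2$ along the leaf $3$. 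One ends with $1,3,2$ to the left of $4$, which is exactly what the paper itself finds for Kozakai's algorithm.

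\textbf{Relation to the paper's proof.} The paper argues by induction on the size of the largest rooted branch and applies the induction hypothesis to the detached branch with labels $a,\dots,d$. That step tacitly assumes exactly your local lemma. You located the crux correctly, but it is false in this generality. A correct statement would need extra hypotheses on the points at the far ends of the centers.

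\textbf{Your invariant is also the wrong quantity to track.} It can fail midway even when the final order is the identity. Take the ordinary pointing on the tree in which $x$ carries two leaves $x_1,x_2$ instead of one. The labels are $r=1$, $w=2$, $x_1=3$, $x_2=4$. After $\mu^+_w$ the new branch is read as $2,4,3$, yet the star ends as $1,2,3,4$. So step-by-step preservation of the Green's-walk numbering would not be the right tool even for a corrected statement.
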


\begin{proof}
	Under the edge numbering determined by a pointing, all the numbers on a fixed brnach are consecutive, because we do not leave the branch until every edge has been given a number. Let us do induction on the maximal size $M$ of a rooted branch. When $M=1$, we have a Brauer star.  Assume the theorem for $M-1$. Given a rooted branch of size $M$, since $M>1$, there is at least one branch attached to the root.  If the point is to the left of the primary edge, then we operate by $\mu^-$, which attached the lopped off branch on the clockwise side of the original root.  If $[a,a+1,\dots, c-1]$ are the numbers in the left edge group, then the edge number $a$ must lie in the branch being removed,  since the branch represents an interval $[a,a+1,\dots,d]$, with $d<c$, By the induction hypothesis, a complete mutation reduction will produce  edges numbered $a, a+1, \dots, d$ in a counter-clockwise direction, followed by edges $d+1,\dots, c,c+1,\dots, b$.
	The case when there is no point between the entering edge and the coprimary edge is dual.  
	
\end{proof}

As an illustration of this proposition, we will study in detail a highly symmetric
class of pointed trees, of independent interest, the balanced unibranch binary trees.

\begin{defn} A \textit{unibranch binary tree} is a tree with a single root such that there are two branches going out of each non-exceptional vertex which is not at the end of a leaf. We call the tree full of order $t$ if all the leaves are at distance $t$ from the exceptional vertex.  If the unibranch binary tree is pointed, we call it \textit{balanced} if each point is between the two branches going out from the vertex.  
\end{defn}
We are going to show that the permutation of a full balanced unibranch binary tree is right-left symmetric.  In fact, by continuing the ideas of Prop. \ref{duality}, the permutation of any edge-numbered, unibranch symmetric tree with a symmetric pointing must be a left-right symmetric.  However, the notation for the general symmetric tree is awkward, so we are going to concentrate on the right-left symmetric full, balanced binary tree, for which the natural numbering is particularly well-structured and thus simplifies inductions.

We first collect some facts about full balanced unibranch binary trees.

\begin{lem}
	Let $G$ be a unibranch binary tree with a balanced pointing $p$, full of order $t$, numbered by the natural numbering.
	\begin{enumerate}
		\item $G$  has $2^t-1$ edges.
		\item The root is numbered by $2^{t-1}$.
		\item The edges at distance $k$ from the exceptional vertex, $1 \leq k \leq t$
		are numbered from left to right by $2^{t-k}(2n+1), n=1,2,\dots, 2^{k-1}-1$.
	\end{enumerate}
	Note that only the leaves at level $k=t$ are numbered by odd numbers.
\end{lem}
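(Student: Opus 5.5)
We argue by induction on $t$, using the recursive structure of a full unibranch binary tree. For $t=1$ the tree is a single edge, a leaf at the exceptional vertex, with its point trivially placed. It has $1=2^1-1$ edge, numbered $1=2^{0}$, and this agrees with (3) for $k=1$. For $t\ge 2$ we decompose $G$ as the root edge $r$, whose far end $u$ carries exactly two further edges $x$ and $y$. Here $x$ is the primary edge, met first on the Green's walk, and $y$ the coprimary edge. Each of $x,y$ is the root of a subtree $G_x,G_y$. If we regard $u$ as an exceptional vertex, $G_x$ and $G_y$ are again full unibranch binary trees of order $t-1$, and the restricted pointings are balanced. Part (1) is then immediate: $|G| = 1+2(2^{t-1}-1)=2^t-1$.

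The key step is to compare the Green's walk on $G$ with the Green's walks on $G_x$ and $G_y$. Walking counterclockwise from the exceptional vertex, we go down $r$ to $u$ and then enter $G_x$ along $x$. We traverse all of $G_x$ (touching each edge with the left hand) and return along $x$ to $u$. Next we pass the sector between $x$ and $y$, which is where the balanced pointing places the point of $u$, then traverse $G_y$, and finally return up $r$. The points met inside $G_x$ are exactly those of its own vertices, met in the same order as on the Green's walk of $G_x$ started at $u$. Since vertex $u$ itself is not a vertex of the numbering of $G_x$, the same holds for $G_y$. Therefore the numbering restricted to $G_x$ is the natural numbering of $G_x$, and $r$ receives the next number, $(2^{t-1}-1)+1=2^{t-1}$; this proves (2). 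The numbering restricted to $G_y$ is the natural numbering of $G_y$ shifted by $2^{t-1}$. I expect this step, checking that the point at $u$ is met exactly between the two subtree traversals and that no other point is interleaved, to be the main obstacle. It is where the hypothesis of a balanced pointing is used; with the ordinary pointing the root would be numbered $1$ instead.

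Part (3) then follows by bookkeeping. The level $k=1$ consists of $r$ alone, numbered $2^{t-1}\cdot 1$. For $k\ge 2$, the edges of $G$ at distance $k$ are the edges of $G_x$ and of $G_y$ at distance $k-1$ within those trees. By induction (order $t-1$), those in $G_x$ are numbered $2^{(t-1)-(k-1)}(2n+1)=2^{t-k}(2n+1)$ for $n=0,\dots,2^{k-2}-1$, from left to right. Those in $G_y$ receive the same numbers plus $2^{t-1}=2^{t-k}\cdot 2^{k-1}$, which gives $2^{t-k}(2n'+1)$ with $n'=n+2^{k-2}$ running over $2^{k-2},\dots,2^{k-1}-1$. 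Concatenating the two lists gives all $n=0,\dots,2^{k-1}-1$ in left-to-right order. This shows the range in (3) should start at $n=0$. Finally, $2^{t-k}(2n+1)$ is odd precisely when $k=t$, which gives the closing remark.
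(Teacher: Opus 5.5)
Your proof is correct. You are also right that the range in (3) must start at $n=0$: otherwise level $k=1$, the root, is not covered, and the paper's own proof of (3) in fact ends with $m=0,1,\dots,2^k-1$.

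For (1) and (2) the paper argues essentially as you do. It counts edges level by level, and it observes that the two edge-groups at the root's point are full trees of order $t-1$. The paper says these have $2^{t-1}$ edges each, an off-by-one slip that your $(2^{t-1}-1)+1$ avoids.

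For (3) the routes differ. The paper fixes $t$ and inducts on the level $k$. It uses a local rule: the two edges hanging from the edge numbered $g$ at level $k$ are numbered $g\mp 2^{t-k-1}$. The reason is that between the left child's point and $g$'s point, the walk numbers exactly the left child's right edge-group, a full tree of order $t-k-1$ with $2^{t-k-1}-1$ edges (dually on the right). This sends $2^{t-k}(2n+1)$ to $2^{t-k-1}(4n+1)$ and $2^{t-k-1}(4n+3)$. You instead induct on $t$ and obtain the right subtree's labels as one global shift by $2^{t-1}$.

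Both routes rest on the same fact: with a balanced pointing, every subtree is numbered by a consecutive block, with its entering edge's number immediately after its left half. The paper applies this at every vertex, which gives a parent-to-children rule convenient for reading off labels as in Fig.~5.1. You apply it once, at the root, and verify it explicitly from the walk rather than using it tacitly. That yields all three parts from a single induction via the self-similarity of the numbering.
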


\begin{proof}
	\begin{enumerate}
		\item Since the number of edges doubles at each level, we get
		\[
		1+2+4+\dots+2^{t-1}=2^t-1.
		\]
		\item At the point of the root, the left edge-group and the right edge-group are both full unibranch binary trees of order $t-1$, so each contains $2^{t-1}$ edges.  Thus the root is numbered by the number after $2^{t-1}$, which is $2^{t-1}$, as desired.
		\item We now do induction on $k$. We have already established the base case of $k=1$. We assume that the numbering of the edges for $k<t$ is 
		$2^{t-k}(2n+1), n=1,2,\dots, 2^{k-1}-1$.  We calculate the numbers of the edges at level $k+1$. From the edge numbered by $g=2^{t-k}(2n+1)$ two edges go out. The branch on the left is lower than $g$ by the number of edges in its right edge-group plus one.  The right edge-group is a full binary tree of order
		$t-k-1$, and has $2^{t-k-1}-1$ branches. For the right edge, we add this same . Thus if the left edge has  number $g_L$ and the right edge has number $g_R$, we have
		\[
		g_L=g-2^{t-k-1}=2^{t-k-1}(4n+2-1)=2^{t-k-1}(4n+1),
		\]
		\[
		g_R=g+2^{t-k-1}=2^{t-k-1}(4n+2+1)=2^{t-k-1}(4n+3),
		\]
		This list of numbers is equivalent to $2^{t-k-1}(2m+1)$ for $m=0,1,\dots,2^k-1$, as desired.
	\end{enumerate}
\end{proof}

\begin{lem}
	The permutation $\sigma$ under the Kozakai algorithm  of a full unibranch binary tree of order $t$ with balanced pointing  is left-right symmetric. Letting $T=2^t-1$, there is a cycle $(i_1,i_2, \dots, i_h)$ in $\pi_L$ if and only if there is a cycle $(T-i_h,\dots, T-i_2,T-i_1)$ in $\pi_R$.
\end{lem}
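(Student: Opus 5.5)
The plan is to use the mirror symmetry of the plane. Let $\rho$ be reflection of the planar embedding of $G$ in the line through the root. Since $G$ is full of order $t$ and the pointing is balanced, $\rho$ maps $G$ to itself and preserves $p$. It reverses the cyclic order at every vertex, so the primary edge at a vertex becomes the coprimary edge. Green's walks become reversed Green's walks. The Kauer move $\mu^-$ centered at $i$ is carried to the move $\mu^+$ centered at $\rho(i)$. A point lying between the entering edge and the primary edge goes to a point lying between the entering edge and the coprimary edge.

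By the numerical description in the preceding lemma, the edge numbered $g=2^{t-k}(2n+1)$ is sent by $\rho$ to the edge at the same level in the mirror position. Its number is $2^{t-k}(2(2^{k-1}-1-n)+1)=2^t-g$. So on edge labels $\rho$ acts as $g\mapsto 2^t-g$, which exchanges the left and right edge-groups and fixes the root $2^{t-1}$. The label $T-i$ in the statement has to be read with this normalisation, so that $\rho$ is exactly $g\mapsto T+1-g$ on the labels $1,\dots,T$. I would first make this reflection map precise, using part (3) of the preceding lemma and an induction on $k$.

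Next I would run Kozakai's algorithm on $G(p)$ and, in parallel, its mirror image. By Lemma~\ref{split}, $\sigma=\pi_L(c)\pi_R$, and the two halves are produced by separate phases. In the first phase the default $\mu^-$ with a Green's walk consumes the left edge-group until the root point is exposed. In the second phase $\mu^+$ with reversed Green's walks consumes the right edge-group. I would prove, by induction on the number of mutations in a phase, that $\rho$ carries the phase-$\mu^-$ sequence of pointed trees $G_t(p_t)$ on the left edge-group step for step onto the phase-$\mu^+$ sequence on the right edge-group. At each step one checks three things.
\begin{itemize}
\item The rule selecting the center ("first point on the primary walk") corresponds under $\rho$ to the rule "first point on the coprimary walk".
\item The pointed move $\mu^-(p)$, whose vertex slides down the edge before $i$ and pushes points, is dual to $\mu^+(p)$ under reflection.
\item New rooted branches are created clockwise on one side and counterclockwise on the other, as in the proof of Lemma~\ref{split}.
\end{itemize}
This is the content hinted at in Remark~\ref{duality}. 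The freedom in Step 4 of choosing the next current branch must be fixed compatibly, choosing the mirror choice on the right.

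Finally I would read off the permutations. The left edges end as an interval on the clockwise side of the root in the Brauer star, and the right edges end on the counterclockwise side. Reflection reverses the counterclockwise order of the final star. So if the left phase sends label $i$ to position $\pi_L(i)$, the right phase sends $\rho(i)$ to $\rho(\pi_L(i))$. Hence $\pi_R=\rho\,\pi_L\,\rho^{-1}$, and a cycle $(i_1,\dots,i_h)$ of $\pi_L$ corresponds to the cycle $(\rho(i_1),\dots,\rho(i_h))$ of $\pi_R$. Whether this cycle is written forward or reversed, as in the statement, depends on the convention for reading a permutation from the final counterclockwise order. It has to be checked on the case $t=2$, where it could invert the cycle. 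The main obstacle is the second step: verifying that the pointed moves and the order in which the two phases interleave with the sub-branches really commute with $\rho$, given that the Kozakai algorithm is not literally symmetric in time (the left group is done first). Here I would rely on Lemma~\ref{split}, under which the two phases act on disjoint edge sets, so their order is irrelevant to $\sigma$.
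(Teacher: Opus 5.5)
Your route is the paper's: use the reflection $\rho$ of the balanced tree, decouple the two edge-groups with Lemma~\ref{split}, and match each move on one side with its dual on the other. The step that fails as you state it is the claim that $\rho$ carries the run of Kozakai's algorithm on the left edge-group step for step onto its run on the right. The moves and the center-selection walks are $\rho$-equivariant, but the \emph{sign} rule in Step~2 is not. It takes $\epsilon_t=-$ whenever the root point is not in the sector (root, primary edge), and $\rho$ turns that condition into ``not in the sector (coprimary edge, root)''. The two agree only when the root point lies in a sector bordering the root edge. When a transplanted branch has its root point strictly between two non-root edges, the literal algorithm chooses $\mu^-$ on both sides. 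This already happens for $t=4$:
\begin{itemize}
\item the branch rooted at $4$ (cyclic order $4,6,5,7$, point between $5$ and $7$) is processed by $\mu_6^-,\mu_5^-,\mu_7^+$;
\item its mirror, rooted at $12$ (order $12,9,11,10$, point between $9$ and $11$), is processed by $\mu_9^-,\mu_{10}^+,\mu_{11}^+$, not by the mirrored sequence $\mu_{10}^+,\mu_{11}^+,\mu_9^-$.
\end{itemize}
This also shows your ``first phase'' is not pure $\mu^-$. Both sequences happen to end in $9,12,11,10$, but nothing in your argument guarantees that.

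To close the gap, you need the independence behind Lemma~\ref{split} at every rooted branch, including transplanted ones, not only at the original root. Concretely, show by induction on branch size that processing a rooted branch's left edge-group before or after its right edge-group gives the same final arrangement. Then the literal right-hand run is a reordering of the $\rho$-image of the left-hand run, and $\pi_R=\rho\,\pi_L\,\rho^{-1}$ follows. The paper leans on the same unproved independence when it simply decides to ``follow each $\mu_i^-$ by the corresponding $\mu_j^+$''.

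Your relabelling $\rho(g)=2^t-g=T+1-g$ is right. The statement's $T-i$ is off by one: in the paper's figure for $t=4$, the rightmost leaf is $15=T$. With one consistent reading of the final star, conjugation gives the forward cycle $(\rho(i_1),\dots,\rho(i_h))$. The reversed form in the statement, and in the paper's pairing of $(4,6,7)$ with $(T-7,T-6,T-4)$, comes from reading the right half in the opposite direction. A check at $t=2$ cannot settle this: $\sigma$ is the identity there, only transpositions occur for $t\le 4$, and the first $3$-cycle $(4,6,7)$ needs $t\ge 5$.
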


\begin{proof}
	Since we showed in Prop. \ref{split} that the actions of $\mu_i^-$ and $\mu_j^+$ are independent and the order in which they are taken is irrelevant to the final permutation, we will follow each $\mu_i^-$ by the corresponding $\mu_j^+$, thus preserving the symmetry of the tree and of its pointing. 
	
	Start with the first primary node ending with a point, so that it is numbered by $1$. It is in fact a leaf, and by a sequence of $t-1$ mutations will become the leftmost leaf at the exceptional vertex.  The corresponding leaf which is coprimary will have the number $T-1$, the rightmost leaf, and a sequence of $t-1$ mutations $\mu_{T-1}^+ $ will make it the leftmost leaf in the Brauer star. The permutation $\pi_L$ contains a cycle $(1)$, while the permutation $\pi_R$ contains the dual symmetric cycle $(T-1)$. If $t=2$ for then $T=3$, we have reached the Brauer star, and the permucation is the identity, which is surely symmetric. Suppose $t>2$ and we will continue.
	
	Removing this leaf on the right exposed the point at the vertex $2$, which is at the far end of the edge numbered $2$ in level $t-1$, connectied to the primary edge numbered $3$ in level $t$.  According to Kozakai's algorithm, we perform the mutation $\mu_{2}^-$, creating a transposition $(2,3)$. The corresponding mutation $\mu_{T-2}^+$ where the edge $T-2$ is connected to the coprimary edge $T-3$ produces a transpostion $(T-3,T-2)$. Both edges have become leaves, which are moved to the exceptional vertex by appropriate powers of $\mu_3^-$ and $\mu_2^-$, and similarly on the right.  At this point the main branch is flanked by $1,3,2$ on the left and $T-2,T-3,T-1$ on the right. So far the permutation is right-left symmetric. If $t=3$ and $T=7$ we are finished, so asssume $T>3$.
	
	Removing the branch with edge numbered $2$ exposed the point at vertex $4$. The edge numbered $4$ is connected to the edge numbered $6$, which is attached to the edge $6$. Attached to $6$ are the two leaves numbered $5$ and $7$, as in Figure 5.1. After the mutations $\mu_4^-$, we have two possibilities.  
	
	If $t=4$, $T=15$, we get $4$ connected to the exceptional vertex at one end and to three leaves numbered $6$, $5$, and $7$, with the point between $5$ and $7$. The leaves $6$ and $5$ are moved to $v_0$.  Now, on the branch at $4$, the point on the Green's walk is on an edge attached to $v_0$, so we operate by $\mu_7^+$.  Performing the dual operations on the right, we have the branch rooted at $8$ stripped of all its leaves, and the sequence at $v_0$ is 
	\[
	1,3,2,6,5,4,7,8,9,12,11,10,14,13,15
	\]
	If $t>4$, then the mutation $\mu_4^+$ reattaches $4$ to $16$, and at the other end to $6$, $5$,$7$  as before.  Stripping off the leaves $6$ and $5$, we perform a second mutation
	$\mu_4^-$, after which the leaves $7$ and $4$ can be moved to the exceptional vertex, giving $1,3,2,6,5,7,4,$ to the left of the main branch.  the dual operations produce
	$T-4,T-7,T-5,T-6,T-2,T-3,T-1$ to the left.  These correspond to cycles $(2,3)(4,6,7)$ on the left and $(T-7, T-6,T-4)(T-3,T-2)$ on the right.
	
	We continued this far to give an example of a subbranch transplanted to $v_0$, for which one needs to use $\mu^+$.  However, as far as the proposition itself is concerned, the symmetry implies that we can match any action on the right by a dual action on the left, so that the total permutation is left-right symmetric.

\end{proof}

	\begin{tikzpicture}	

	\draw (0,0) node[anchor=south]{$v_0$} -- (0,-.5) node[anchor=east]{$8$} --(0,-1) node[anchor=north]{$\bullet$};
	\draw (0,-1) --(-1,-1.5) node[anchor=east]{$4$} -- (-2,-2) node[anchor=north]{$\bullet$};
	\draw (0,-1) -- (1,-1.5)node[anchor=west]{$12$} --(2,-2) node[anchor=north]{$\bullet$};
	\draw (-2,-2) -- (-2.5,-2.5)node[anchor=east]{$2$} -- (-3,-3)  node[anchor=north]{$\bullet$};
	\draw (-2,-2) -- (-1.5,-2.5) node[anchor=west]{$6$} -- (-1,-3) node[anchor=north]{$\bullet$}; 
	\draw (2,-2) --(2.5,-2.5) node[anchor=west]{$14$}  --(3,-3) node[anchor=north]{$\bullet$};
	\draw (2,-2) -- (1.5,-2.5) node[anchor=east]{$10$}  -- (1,-3) node[anchor=north]{$\bullet$};
	\draw (-3,-3) -- (-3.25,-3.5)node[anchor=east]{$1$} -- (-3.5,-4) node[anchor=north]{$\bullet$};
	\draw (-3,-3) -- (-2.75, -3.5) node[anchor=west]{$3$} -- (-2.5,-4) node[anchor=north]{$\bullet$};	
	\draw (-1,-3) -- (-.75,-3.5) node[anchor=west]{$7$} -- (-.5,-4) node[anchor=north]{$\bullet$};
	\draw (-1,-3) -- (-1.25,-3.5) node[anchor=east]{$5$} -- (-1.5,-4) node[anchor=north]{$\bullet$};	
	\draw (3,-3) -- (3.25,-3.5) node[anchor=west]{$15$} -- (3.5,-4) node[anchor=north]{$\bullet$};
	\draw (3,-3) -- (2.75,-3.5) node[anchor=east]{$13$} -- (2.5,-4) node[anchor=north]{$\bullet$};	
\draw (1,-3) -- (.75,-3.5) node[anchor=east]{$9$} -- (.5,-4) node[anchor=north]{$\bullet$};
\draw (1,-3) -- (1.25,-3.5)node[anchor=west]{$11
	$} -- (1.5,-4) node[anchor=north]{$\bullet$};	
\end{tikzpicture}
\begin{center}
	Fig.5.1
\end{center}

\begin{remark} In the original Master's thesis \cite{F}, it was shown that as long as $k \leq \frac{t}{2}$, there is a cycle in $\sigma$ of length $k+1$ of the form 
	\[
	(2^k,2^{k-1}(2^2-1),2^{k-2}(2^3-1),\dots, (2^{k+1}-1))
	\]
\end{remark}

{Department of Science Education, Weizmann Institute of Science,  Rehovot, Israel and Givat Washington Academic College, Givat Washington, Israel}\\
email: reutifrenkel@gmail.com\\
{Department of Mathematics, Bar-Ilan University, Ramat-Gan,  52900 Israel}.
email: {mschaps@macs.biu.ac.il}\\
{Department of Computer Science, Shamoon College of Engineering,  Be'er Sheva, 8410802 Israel}.\\
 email: zehavzv@sce.ac.il\\
MSC2020: {20C05,20C20, 16E35,18G80}\\
Keywords: {Brauer trees,  tilting complexes, mutations, algorithms}\\
Acknowledgements: This research was partly funded by a grant from the Research and Development Authority at Shamoon College of Engineering. It is based in part on the Master's thesis of Reut Frenkel-Mayzlish at Bar-Ilan University.

\end{document}